\theoremstyle{plain}
\newtheorem{definition}{Definition}
\newtheorem{lemma}{Lemma}
\newtheorem{proposition}{Proposition}
\newtheorem{procedure}{Procedure}
\newtheorem{remark}{Remark}
\newtheorem{theorem}{Theorem}
\numberwithin{equation}{section}
\begin{document}

\title[On the complex moment problem]{On the complex moment problem as a dynamic inverse problem for
a discrete system}

\author{A. S. Mikhaylov}
\address{St. Petersburg   Department   of   V.A. Steklov    Institute   of   Mathematics
of   the   Russian   Academy   of   Sciences, 7, Fontanka, 191023
St. Petersburg, Russia and Saint Petersburg State University,
St.Petersburg State University, 7/9 Universitetskaya nab., St.
Petersburg, 199034 Russia.}
\email{mikhaylov@pdmi.ras.ru}

\author{ V. S. Mikhaylov}
\address{St.Petersburg   Department   of   V.A.Steklov    Institute   of   Mathematics
of   the   Russian   Academy   of   Sciences, 7, Fontanka, 191023
St. Petersburg, Russia.} \email{ftvsm78@gmail.com}

\thanks{No new data were created or analyzed during this study. Data sharing is not applicable to this article.}

\thanks{All authors declare that they have no conflicts of interest.}

\keywords{complex moment problem, inverse problem, Jacobi
matrices, Boundary Control method, characterization of inverse
data}
\date{September, 2025}

\maketitle






\noindent {\bf Abstract.} We consider the complex moment problem,
that is the problem of constructing a positive Borel measure on
$\mathbb{C}$ from a given set of moments. We relate this problem
to the dynamic inverse problem for the discrete system associated
with the complex Jacobi matrix. We propose a method that allows
one to construct a discrete measure which is a solution to the
truncated moment problem, we also show how the characterization of
dynamic inverse data in solving the inverse problem provides
sufficient conditions for solving the full complex moment problem.

\section{Introduction.}

The complex moment problem is: given a set of complex numbers
$s_0,s_1,s_2,\ldots$, to find a Borel measure $d\rho$ on
$\mathbb{C}$ such that
\begin{equation}
\label{Moment_eq} s_k=\int_{\mathbb{C}}
\lambda^k\,d\rho(\lambda),\quad k=0,1,2,\ldots.
\end{equation}
If such a measure exists $s_0,s_1,s_2,\ldots$ are called moments
of this measure. The truncated moment problem is the problem of
finding a measure that satisfies a finite number of moment
equalities (\ref{Moment_eq}) for $k=0,1,\ldots,2N-2$ for some
$N\in \mathbb{N}$.

For a given sequence of complex numbers $\{a_1,a_2,\ldots\}$,
$\{b_1, b_2,\ldots  \}$, $a_i\not= 0$, we define the complex
Jacobi matrix:
\begin{equation} \label{Jac_matr}
A=\begin{pmatrix} b_1 & a_1 & 0 & 0 & 0 &\ldots \\
a_1 & b_2 & a_2 & 0 & 0 &\ldots \\
0 & a_2 & b_3 & a_3 & 0 & \ldots \\
\ldots &\ldots  &\ldots &\ldots & \ldots &\ldots
\end{pmatrix}.
\end{equation}
For $N\in \mathbb{N}$, by $A^N$ we denote the $N\times N$ Jacobi
matrix which is a block of (\ref{Jac_matr}) consisting of the
intersection of first $N$ columns with first $N$ rows of $A$.

Associated with the matrix $A$ and additional parameter
$\mathbb{C}\ni a_0\not=0$ is a dynamical system with discrete
time:
\begin{equation}
\label{Jacobi_dyn}
\begin{cases}
u_{n,t+1}+u_{n,t-1}-a_{n}u_{n+1,t}-a_{n-1}u_{n-1,t}-b_nu_{n,t}=0,\quad n,t\in \mathbb{N},\\
u_{n,-1}=u_{n,0}=0,\quad n\in \mathbb{N}, \\
u_{0,t}=f_t,\quad t\in \mathbb{N}\cup\{0\},
\end{cases}
\end{equation}
which is a discrete analogue of the dynamical systems governed by
a wave equation on the semi-axis \cite{AM,BM_1}. By analogy with
the continuous problems \cite{B07}, we consider the complex
sequence $f=(f_0,f_1,\ldots)$ as a \emph{boundary control}. The
solution to (\ref{Jacobi_dyn}) we denote by $u^f_{n,t}$. We also
consider the dynamical system associated with finite matrix $A_N$:
\begin{equation}
\label{Jacobi_dyn_int}
\begin{cases}
v_{n,t+1}+v_{n,t-1}-a_nv_{n+1,t}-a_{n-1}v_{n-1,t}-b_nv_{n,t}=0,\,\, t\in \mathbb{N}_0,\,\, n\in 1,\ldots, N,\\
v_{n,\,-1}=v_{n,\,0}=0,\quad n=1,2,\ldots,N+1, \\
v_{0,\,t}=f_t,\quad v_{N+1,\,t}=0,\quad t\in \mathbb{N}_0,
\end{cases}
\end{equation}
which is a natural analog of the dynamical systems governed by a
wave equation on the interval, the solution to
(\ref{Jacobi_dyn_int}) is denoted by $v^f$.

Fixing $T\in \mathbb{N}$, we associate the \emph{response
operator} with (\ref{Jacobi_dyn}), which maps the control
$f=(f_0,\ldots f_{T-1})$ to $u^f_{1,t}$:
\begin{equation}
\label{Resp_op} \left(R^T f\right)_t:=u^f_{1,t},\quad t=1,\ldots,
T.
\end{equation}

In the second section using the Autonne-Takagi \cite{W}
factorization, we derive a special "spectral representation" for
the solution of (\ref{Jacobi_dyn_int}).

In the third section we present results on the dynamic inverse
problem (IP) for the system (\ref{Jacobi_dyn}) in accordance with
\cite{MM4}. This problems is a natural discrete analogue of the IP
for the wave equation on the half-axis, where the dynamic
Dirichlet to Neumann map is used as inverse data, see \cite{B07}.
The IP for the dynamical system (\ref{Jacobi_dyn}) with real
Jacobi matrix is considered in \cite{MM1,MM2}. The connections
between the IP for the system with real Jacobi matrix and
classical moment problems are described in \cite{MM3,MM5}. The IP
for the dynamical system with complex Jacobi matrix is considered
in \cite{MM4}.

In last section we describe connections between the IPs for
(\ref{Jacobi_dyn_int}) and (\ref{Jacobi_dyn}) and complex moment
problem and introduce a discrete Borel measure on $\mathbb{C}$,
concentrated on the finite set of point, associated with
(\ref{Jacobi_dyn_int}), which is a solution to the truncated
complex moment problem. We emphasize that we propose a method that
allows one to construct a discrete measure, i.e. find points of
its support and determine the masses at these points. Letting $N$
tend to infinity in (\ref{Jacobi_dyn_int}) we obtain a sequence of
measures that converges to a solution to the full complex moment
problem.

Another approach to the complex moment problem was proposed in
\cite{Zag}, where the author used the generalized spectral
function introduced in \cite{G} as a basic tool and obtained
sufficient conditions on the moment sequence under the condition
that it is bounded. Note that our method do not require the
boundedness of $s_k$, $k\geqslant 0$.

If $H\in C^{n\times n}$ then by $H^*$ we denote the matrix
conjugate to $H$ and recall that $H^*=\overline{H^\top}$, where
$H^\top$ is a transpose of $H$.

\section{Discrete dynamical system. Forward problem, special representation of the solution.}

We first derive a special representation (Fourier-type expansion)
of the the solution to (\ref{Jacobi_dyn_int}). Note that another
representation (Duhamel-type), important for solving the IP
obtained in \cite{MM4}, is used in the next section. Our special
representation is based on the following Autonne-Takagi \cite{W}
factorization:
\begin{theorem}
Let $H\in \mathbb{C}^{n\times n}$ be a complex symmetric matrix:
$H^*=\overline H$, then there exists a unitary matrix $U$ such
that
\begin{equation}
\label{AutTak}
UHU^\top=D=\begin{pmatrix} \hat{d}_1& 0&\ldots &0\\
0& \hat{d}_2&\ldots &0\\
 \ldots& \ldots&\ldots &\ldots\\
 0& 0&\ldots &\hat{d}_n
\end{pmatrix},
\end{equation}
where $\hat{d}_i\geqslant 0,$ $i=1\ldots,n$.
\end{theorem}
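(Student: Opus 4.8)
The plan is to argue by induction on the order $n$, peeling off one nonnegative diagonal entry at a time. I first record the reformulation of the hypothesis: $H^*=\overline H$ is the same as $H^\top=H$, and hence $H^*H=\overline H\,H$ is Hermitian and positive semidefinite, its eigenvalues being the squares $\sigma_i^2\geqslant 0$ of the singular values of $H$. The base case $n=1$ is a pure phase rotation: for $H=(h)$ write $h=\sigma e^{i\varphi}$ with $\sigma\geqslant 0$ and take $U=(e^{-i\varphi/2})$, giving $UHU^\top=\sigma$.

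The heart of the induction is the existence, when $H\neq 0$, of a unit vector $u$ and a scalar $\sigma\geqslant 0$ with $Hu=\sigma\overline u$. To produce it, let $\sigma^2$ be the largest eigenvalue of $\overline H\,H$ and $v$ a corresponding unit eigenvector, so $\overline H\,Hv=\sigma^2v$. Put $w=Hv$; then $\|w\|^2=v^*\overline H\,Hv=\sigma^2$, and conjugating the eigenrelation gives $H\overline w=\sigma^2\overline v$. Writing $p=v$ and $q=\overline w/\sigma$ (both unit vectors), these two facts become the symmetric pair $Hp=\sigma\overline q$ and $Hq=\sigma\overline p$. Adding them shows $H(p+q)=\sigma\,\overline{(p+q)}$, so if $p+q\neq 0$ its normalization is the desired $u$; if $p+q=0$ then $p-q\neq 0$ and one checks $H(p-q)=-\sigma\,\overline{(p-q)}$, which is repaired by the phase factor $i$, i.e. $u\propto i(p-q)$. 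I expect this con-eigenvector construction to be the main obstacle: it is the only place the symmetry $H^\top=H$ is essentially used, and the phase bookkeeping in combining $p$ and $q$ must be handled so that the eigenvalue lands on $+\sigma$ rather than $-\sigma$.

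With $u$ in hand I deflate. Complete $u$ to a unitary $Q$ having $u$ as first column. Since $Qe_1=u$ and the columns of $Q$ are orthonormal, $Q^\top\overline u=e_1$, whence $Q^\top HQ\,e_1=Q^\top H u=\sigma Q^\top\overline u=\sigma e_1$; thus the first column of $Q^\top HQ$ is $\sigma e_1$. But $Q^\top HQ$ is again complex symmetric, as $(Q^\top HQ)^\top=Q^\top H^\top Q=Q^\top HQ$, so its first row is $\sigma e_1^\top$ as well and it takes the block form $\mathrm{diag}(\sigma,H_2)$ with $H_2$ an $(n-1)\times(n-1)$ complex symmetric matrix.

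Finally I close the induction. Applying the inductive hypothesis to $H_2$ yields a unitary $U_2$ with $U_2H_2U_2^\top$ diagonal and nonnegative. Setting $\widetilde U=1\oplus U_2$ and $U=\widetilde U\,Q^\top$ (which is unitary, since both $Q^\top$ and $\widetilde U$ are), a direct computation gives $UHU^\top=\widetilde U\,(Q^\top HQ)\,\widetilde U^\top=\mathrm{diag}(\sigma,U_2H_2U_2^\top)=D$, with $D$ diagonal and, since each extracted $\sigma$ is a singular value, nonnegative. This is exactly the assertion of the theorem.
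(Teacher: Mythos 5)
Your proof is correct, but note that the paper does not actually prove this statement: it is quoted as the known Autonne--Takagi factorization with a bare citation to \cite{W}, so there is no in-paper argument to compare against. What you have written is the standard self-contained proof of that factorization, and every step checks out: the identification $H^*=\overline H\iff H^\top=H$, the positive semidefiniteness of $\overline H H=H^*H$, the con-eigenvector construction (the computations $Hp=\sigma\overline q$, $Hq=\sigma\overline p$, the sum/difference dichotomy and the phase repair by $i$ in the case $p+q=0$ are all right), the deflation via $Q^\top HQ$ using $Q^\top\overline u=e_1$ and the symmetry of $Q^\top HQ$, and the assembly $U=\widetilde U\,Q^\top$. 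The only point left implicit is the degenerate case $H=0$ (which can also arise for the deflated block $H_2$), where the con-eigenvector construction divides by $\sigma=0$; there one simply takes $U=I$, so this is a one-line remark rather than a gap. Supplying this proof is a net gain over the paper, which relies on an external (and informal) reference for a result it uses essentially.
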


We apply this theorem for the matrix $A^N$: there exist unitary
$U^N$ such that $U^NA^N\left(U^N\right)^\top$ as in the theorem.
Note that in our case all $\hat{d}_i>0$, since otherwise $A^Nu=0$
implies $\det A^N=0$, which in turn implies linear dependence of
some rows or columns of $A^N$, which is impossible. At the same
time some of the $\hat{d}_i$ can coincide. In the latter case we
modify the unitary matrix as follows: assume for example that
$\hat{d}_1=\hat{d}_2$, then, taking
\begin{equation*}
U_1^N:=\begin{pmatrix} 1& 0&\ldots &0\\
0& e^{i\frac{\varphi_1}{2}}&\ldots &0\\
 \ldots& \ldots&\ldots &\ldots\\
 0& 0&\ldots &1
\end{pmatrix}U^N,
\end{equation*}
we obtain that
\begin{equation*}
U_1^NA^N\left(U_1^N\right)^\top=\begin{pmatrix} \hat{d}_1& 0&\ldots &0\\
0& \hat{d}_2e^{i\varphi}&\ldots &0\\
 \ldots& \ldots&\ldots &\ldots\\
 0& 0&\ldots &\hat{d}_n
\end{pmatrix}.
\end{equation*}

Using these arguments we can make all $\hat{d}_i$ in the
representation (\ref{AutTak}) distinct (but some of them can be
complex). To summarize what we did so far, in what follows we
choose unitary $U$ (we drop $N$) such that
\begin{equation}
\label{Factor}
UA^NU^\top=\begin{pmatrix} \hat{d}_1& 0&\ldots &0\\
0& \hat{d}_2&\ldots &0\\
 \ldots& \ldots&\ldots &\ldots\\
 0& 0&\ldots &\hat{d}_n
\end{pmatrix},\quad \hat{d}_i\in \mathbb{C}\backslash\{0\}, \hat{d}_i\not=\hat{d}_j,\, i\not=j,\,i,j=1\ldots,N.
\end{equation}

We introduce the vectors
\begin{equation*}
e_i=\begin{pmatrix}0\\ \cdot\\1\\ \cdot\\0\end{pmatrix},\quad
\text{with $1$ on i-th place}, i=1,\ldots,N.
\end{equation*}
Then
\begin{equation*}
UA^NU^\top e_i=\hat{d}_ie_i,
\end{equation*}
we multiply the equality above from the left by $U^*$ and get
\begin{equation*}
A^NU^\top e_i=\hat{d}_iU^*e_i=\hat{d}_i\overline{U^\top}e_i,
\end{equation*}
Introducing the notation
\begin{equation*}
U^\top =\left(\hat{u}^1 \,|\, \hat{u}^2 \,|\, \ldots \,|\,
\hat{u}^N\right),\quad \hat{u}^i=\begin{pmatrix} \hat{u}^i_1\\
{\hat{u}}^i_2\\
\ldots\\
{\hat{u}}^i_n
\end{pmatrix}
\end{equation*}
that is $\hat{u}^i$ is a column in the matrix $U^\top$, we see
that $\hat{u}^i$ satisfies:
\begin{equation}
\label{Spectral_strange}
A^N\hat{u}^i=\hat{d}_i\overline{\hat{u}^i},\quad A^N\begin{pmatrix} \hat{u}^i_1\\
{\hat{u}}^i_2\\
\ldots\\
{\hat{u}}^i_n
\end{pmatrix}=\hat{d}_i \begin{pmatrix}\overline{\hat{u}^i_1}\\
\overline{{u}^i_2}\\
\ldots\\
\overline{\hat{u}^i_n}
\end{pmatrix}.
\end{equation}
Note that the first components of all vectors are non-zero:
\begin{equation*}
\hat{u}^i_1\not=0,\quad i=1,\ldots,N,
\end{equation*}
otherwise it immediately follows from (\ref{Spectral_strange})
that $\hat{u}^i=0$.

Thus, we can introduce the vectors which we use in the
Fourier-type expansion for the solution to (\ref{Jacobi_dyn_int})
in the following way:
\begin{equation*}
u^i=\begin{pmatrix} 0\\
\frac{\hat{u}^i_1}{\hat{u}^i_1}\\
\ldots\\
\frac{\hat{u}^i_N}{\hat{u}^i_1}\\
0
\end{pmatrix},\quad i=1,\ldots,N,
\end{equation*}
so we formally add two values: $u^i_0=u^i_{N+1}=0$ and normalize
vectors such that $u^i_1=1$. In this case we have (see
(\ref{Spectral_strange}))
\begin{equation*}
A^N\begin{pmatrix} \frac{\hat{u}^i_1}{\hat{u}^i_1}\\
\frac{\hat{u}^i_2}{\hat{u}^i_1}\\
\ldots\\
\frac{\hat{u}^i_n}{\hat{u}^i_1}
\end{pmatrix}={d}_i \begin{pmatrix}\frac{\overline{\hat{u}^i_1}}{\overline{\hat{u}^i_1}}\\
\frac{\overline{\hat{u}^i_2}}{\overline{\hat{u}^i_1}}\\
\ldots\\
\frac{\overline{\hat{u}^i_n}}{{\overline{\hat{u}^i_1}}}
\end{pmatrix},
\end{equation*}
where we introduced the notation
\begin{equation}
\label{EigVal}
d_i:=\hat{d_i}\frac{\overline{\hat{u}^i_1}}{{\hat{u}^i_1}},\quad
i=1,\ldots,N.
\end{equation}

We take some vector $(y_0,y_1,\ldots y_N,y_{N+1})$, multiply the
equation in (\ref{Jacobi_dyn_int}) by $y_n$, sum up and change the
order of summation:
\begin{eqnarray*}
0=\sum_{n=1}^N
\left(v_{n,t+1}y_n+v_{n,t-1}y_n-a_{n-1}v_{n,t}y_{n-1}-a_nv_{n,t}y_{n+1}-b_nv_{n,t}y_n\right)\\-
a_Nv_{N+1,t}y_N-a_0v_{0,t}y_1+a_0v_{1,t}y_0+a_Nv_{N,t}y_{N+1}.
\end{eqnarray*}
Now we take  $\left(y_0,y_1,\ldots
y_N,y_{N+1}\right)=\left(u^i_0,u^i_1,\ldots
u^i_N,u^i_{N+1}\right)$ and evaluate counting  $u^i_1=1$ and the
values at $n=0$ and $n=N+1$:
\begin{eqnarray*}
0=\sum_{n=1}^N
\left(v_{n,t+1}u^i_n+v_{n,t-1}u^i_n-v_{n,t}\left(a_{n-1}y^i_{n-1}+a_nvu^i_{n+1}+b_nu^i_n\right)\right)
-a_0v_{0,t}u^i_1.
\end{eqnarray*}
Counting (\ref{Spectral_strange}) we get:
\begin{equation}
\label{Repr1} 0=\sum_{n=1}^N
\left(v_{n,t+1}u^i_n+v_{n,t-1}u^i_n-d_iv_{n,t}\overline{u^i_n}\right)
-a_0f_t\quad i=1,\ldots,N.
\end{equation}

Now we look for the solution to (\ref{Jacobi_dyn_int}) in the
form:
\begin{equation}
\label{Repr2}
v_{n,t}=\begin{cases} \sum_{k=1}^N c^k_t\overline {u^k_n},\\
f_t,\quad n=0.
\end{cases}
\end{equation}
Plugging (\ref{Repr2}) into (\ref{Repr1}) we have:
\begin{equation}
\label{Repr3} \sum_{n=1}^N \left(\sum_{k=1}^N
\left(c_{t+1}^k\overline{u^k_n}+c_{t-1}^k\overline{u^k_n}\right)u_n^i-d_i\sum_{k=1}^Nc^k_t\overline{u^k_n}\overline{u^i_n}\right)
=a_0f_t.
\end{equation}
Introducing the notations
\begin{eqnarray}
\sum_{n=1}^N \overline{u^k_n}u_n^i=\delta_{ki}\rho_i,\quad i=1,\ldots,N,\label{Rho}\\
H_{ki}=\sum_{n=1}^N \overline{u^k_n}\overline{u_n^i}, \quad
k,i=1,\ldots,N,\label{HKL}
\end{eqnarray}
we rewrite (\ref{Repr3}) as:
\begin{equation*}
\left(c_{t+1}^k+c_{t-1}^k\right)\delta_{ki}\rho_i-d_i\sum_{k=1}^N
c^k_tH_{ki}=a_0f_t,\quad t,k,i=1\ldots,N.
\end{equation*}
So we see that $c^i_t$ are determined from
\begin{equation}
\label{Repr4} c_{t+1}^i+c_{t-1}^i -\frac{d_i}{\rho_i}\sum_{k=1}^N
c^k_tH_{ki}=\frac{a_0}{\rho_i}f_t,\quad t,i=1,\ldots,N.
\end{equation}
We look for the solution to (\ref{Repr4}) in the form:
\begin{equation}
\label{CT_coeff}
c_t^i=\frac{a_0}{\rho_i}\sum_{l=0}^tf_lT_{t-l}^{(i)}, \quad
t,i=1,\ldots,N.
\end{equation}
Plugging this representation into (\ref{Repr4}) we get:
\begin{equation*}
\frac{a_0}{\rho_i}\left(\sum_{l=0}^tf_lT_{t+1-l}^{(i)}+\sum_{l=0}^{t-1}f_lT_{t-1-l}^{(i)}-\sum_{k=1}^N
d_i\frac{H_{ki}}{\rho_k}\sum_{l=0}^tf_lT_{t-l}^{(i)}\right)=\frac{a_0}{\rho_i}f_t,
\end{equation*}
changing the order of summation (at this point we use the
additional value $T_{-1}^{(i)}$) we come to
\begin{equation}
\label{Repr5} \sum_{l=0}^t
f_l\left(T_{t+1-l}^{(i)}+T_{t-1-l}^{(i)}-\left(\sum_{k=1}^N
d_i\frac{H_{ki}}{\rho_k}\right)T_{t-l}^{(i)}\right)+f_{t+1}T_0^{(i)}-f_tT_{-1}^{(i)}=f_t.
\end{equation}
We introduce the notation
\begin{equation}
\label{Omega} \omega_i=\sum_{k=1}^N d_i\frac{H_{ki}}{\rho_k},\quad
i=1,\ldots,N,
\end{equation}
then (\ref{Repr5}) holds if $T^{(i)}_t$ satisfies
\begin{equation}
\label{T_coeff}
\begin{cases}
T_{t+1}^{(i)}+T_{t-1}^{(i)}-\omega_iT_t^{(i)}=0,\quad t=0,1\ldots,N,\\
T_0^{(i)}=0,\,\, T_{-1}^{(i)}=-1.
\end{cases}
\end{equation}
Or, in other words, $T_{t}^{(i)}$ are simply the Chebyshev
polynomials of the first kind evaluated at points $\omega_i$:
$T^{(i)}_t=T_t(\omega_i)$.

\section{Inverse problem for discrete dynamical system associated with complex Jacobi matrices.}

In this section we outline the results of the IP for
(\ref{Jacobi_dyn}) according to \cite{MM4}.

We fix some positive integer $T$ and denote by $\mathcal{F}^T$ the
\emph{outer space} of the system (\ref{Jacobi_dyn}), the space of
controls: $\mathcal{F}^T:=\mathbb{C}^T$, $f\in \mathcal{F}^T$,
$f=(f_0,\ldots,f_{T-1})$, $f,g\in \mathcal{F}^T$,
$(f,g)_{\mathcal{F}^T}=\sum_{k=0}^{T-1} f_k\overline{g_k}$. And
let $\mathcal{F}^\infty=\left\{(f_0,f_1,\ldots)\,|\, f_i\in
\mathbb{C},\, i=0,1,\ldots  \right\}$, so $\mathcal{F}^\infty$ is
the set of complex sequences. The following representation formula
for the solution to (\ref{Jacobi_dyn}) can be considered as an
analogue of a Duhamel representation formula for the
initial-boundary value problem for the wave equation with a
potential on the half-line \cite{AM}.
\begin{lemma}
A solution to (\ref{Jacobi_dyn}) admits the representation
\begin{equation}
\label{Jac_sol_rep} u^f_{n,t}=\prod_{k=0}^{n-1}
a_kf_{t-n}+\sum_{s=n}^{t-1}w_{n,s}f_{t-s-1},\quad n,t\in
\mathbb{N},
\end{equation}
where $w_{n,s}$ satisfies the Goursat problem
\begin{equation*}
\left\{
\begin{array}l
w_{n,s+1}+w_{n,s-1}-a_nw_{n+1,s}-a_{n-1}w_{n-1,s}-b_nw_{n,s}=\\
=-\delta_{s,n}(1-a_n^2)\prod_{k=0}^{n-1}a_k,\,n,s\in \mathbb{N}, \,\,s>n,\\
w_{n,n}-b_n\prod_{k=0}^{n-1}a_k-a_{n-1}w_{n-1,n-1}=0,\quad n\in \mathbb{N},\\
w_{0,t}=0,\quad t\in \mathbb{N}_0.
\end{array}
\right.
\end{equation*}
\end{lemma}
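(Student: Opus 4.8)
The plan is to substitute the proposed representation (\ref{Jac_sol_rep}) directly into the dynamical system (\ref{Jacobi_dyn}) and to read off the equations for the kernel $w_{n,s}$ by exploiting the linearity of the system in the control $f$. Throughout I adopt the conventions $f_j=0$ for $j<0$ and $w_{n,s}=0$ for $s<n$, and I write $P_n=\prod_{k=0}^{n-1}a_k$, so that $P_0=1$, $P_{n+1}=a_nP_n$ and $a_{n-1}P_{n-1}=P_n$. With these conventions the initial conditions $u^f_{n,-1}=u^f_{n,0}=0$ and the boundary condition $u^f_{0,t}=f_t$ are immediate from (\ref{Jac_sol_rep}): for $t\le 0$ (and $n\ge 1$) the front term vanishes because its argument $t-n$ is negative and the sum is empty, while for $n=0$ one has $w_{0,s}=0$ and $P_0f_t=f_t$. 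It therefore remains to show that the interior equation holds for every admissible $f$ if and only if $w_{n,s}$ solves the stated Goursat problem.

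Next I would write out the five terms $u^f_{n,t+1}$, $u^f_{n,t-1}$, $a_nu^f_{n+1,t}$, $a_{n-1}u^f_{n-1,t}$ and $b_nu^f_{n,t}$ from (\ref{Jac_sol_rep}), reindex each sum so that every contribution is a multiple of a single atom $f_{t-s-1}$, and collect the coefficient of $f_{t-s-1}$ for each $s$; since $f$ is arbitrary, the equation in (\ref{Jacobi_dyn}) holds exactly when each such coefficient vanishes. The relations $P_{n+1}=a_nP_n$ and $a_{n-1}P_{n-1}=P_n$ make the two fastest front contributions, those carrying $f_{t+1-n}$ and coming from $u^f_{n,t+1}$ and from $a_{n-1}u^f_{n-1,t}$, cancel identically, which is the consistency of the wave front. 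The coefficient at the next level produces the characteristic relation $w_{n,n}-b_nP_n-a_{n-1}w_{n-1,n-1}=0$; the coefficient one level further, after using $a_n^2P_n-P_n=-(1-a_n^2)P_n$ together with the conventions $w_{n,n-1}=w_{n+1,n}=0$, produces precisely the diagonal source term $-\delta_{s,n}(1-a_n^2)P_n$; and at all deeper levels one obtains the homogeneous difference equation $w_{n,s+1}+w_{n,s-1}-a_nw_{n+1,s}-a_{n-1}w_{n-1,s}-b_nw_{n,s}=0$. Together with $w_{0,t}=0$ these are exactly the Goursat conditions of the lemma.

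Finally I would check that the Goursat problem determines $w_{n,s}$ uniquely, so that the representation is legitimate, by a double induction on the distance $p=s-n\ge 0$ from the diagonal and, for fixed $p$, on $n$: the diagonal $p=0$ is fixed by $w_{n,n}=b_nP_n+a_{n-1}w_{n-1,n-1}$ started from $w_{0,0}=0$, and for $p\ge 1$ the difference equation, solved for the value at $(n,n+p)$, expresses it through values already known at smaller $p$ and at level $n-1$, the inner induction being based on $w_{0,s}=0$. Since the forward problem (\ref{Jacobi_dyn}) is itself uniquely solvable by explicit marching in $t$, the function produced by (\ref{Jac_sol_rep}) with this $w$ must coincide with $u^f$. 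The one genuinely delicate point is the index bookkeeping in the second step: keeping track of which atom $f_{t-s-1}$ each shifted sum contributes to, and verifying that the front terms collapse into the single $\delta_{s,n}$ source together with the separate characteristic condition on the diagonal, rather than spilling over into the interior equation. Everything else is routine.
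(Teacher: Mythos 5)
Your proposal is correct and follows the standard argument for this Duhamel-type representation (the paper itself omits the proof, deferring to \cite{MM4}, where exactly this substitution-and-coefficient-matching derivation is carried out): the front terms cancel via $a_{n-1}\prod_{k=0}^{n-2}a_k=\prod_{k=0}^{n-1}a_k$, the coefficient of $f_{t-n}$ gives the characteristic condition, the coefficient of $f_{t-n-1}$ gives the $(1-a_n^2)$ source, and deeper levels give the homogeneous equation. Your bookkeeping in fact shows that the source term lives at $s=n$, so the Goursat equation should be read with $s\geqslant n$ (as written, $\delta_{s,n}$ under the restriction $s>n$ is vacuous) --- a point worth noting but not a gap in your argument.
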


\begin{definition}
For $f,g\in \mathcal{F}^\infty$ we define the convolution
$c=f*g\in \mathcal{F}^\infty$ by the formula
\begin{equation*}
c_t=\sum_{s=0}^{t}f_sg_{t-s},\quad t\in \mathbb{N}\cup \{0\}.
\end{equation*}
\end{definition}

Let us introduce an analog of the dynamic \emph{response operator}
(dynamic Dirichlet-to-Neumann map) \cite{B07} for the system
(\ref{Jacobi_dyn}):
\begin{definition}
The \emph{response operator} $R^T:\mathcal{F}^T\mapsto
\mathbb{C}^T$ for the system (\ref{Jacobi_dyn}) is defined by
(\ref{Resp_op})
\end{definition}
The \emph{response vector} is the convolution kernel of the
response operator,
$r=(r_0,r_1,\ldots,r_{T-1})=(a_0,w_{1,1},w_{1,2},\ldots
w_{1,T-1})$, according to (\ref{Jac_sol_rep}):
\begin{eqnarray}
\label{R_def}
\left(R^Tf\right)_t=u^f_{1,t}=a_0f_{t-1}+\sum_{s=1}^{t-1}
w_{1,s}f_{t-1-s}
\quad t=1,\ldots,T.\\
\notag \left(R^Tf\right)=r*f_{\cdot-1}.
\end{eqnarray}
 By choosing the special control
 $f=\delta=(1,0,0,\ldots)$, the kernel of the response operator can be determined as
\begin{equation}
\label{con11} \left(R^T\delta\right)_t=u^\delta_{1,t}=
r_{t-1},\quad t=1,2,\ldots.
\end{equation}
The inverse problem considered in \cite{MM4} consists in
recovering the Jacobi matrix (i.e. the sequences
$\{a_1,a_2,\ldots\}$, $\{b_1, b_2,\ldots \}$) and $a_0$ from the
response operator.

In what follows we use the same notations for operators and for
matrices of these operators.

We introduce the \emph{inner space} of dynamical system
(\ref{Jacobi_dyn}) $\mathcal{H}^T:=\mathbb{C}^T$, $h\in
\mathcal{H}^T$, $h=(h_1,\ldots, h_T)$ with the inner product
$h,l\in \mathcal{H}^T$, $(h,g)_{\mathcal{H}^T}=\sum_{k=1}^T
h_k\overline{g_k}$. The \emph{control operator}
$W^T:\mathcal{F}^T\mapsto \mathcal{H}^T$ is defined by the rule
\begin{equation}
\label{Control_op}
W^Tf:=u^f_{n,T},\quad n=1,\ldots,T.
\end{equation}
From (\ref{Jac_sol_rep}) we deduce the representation for $W^T$:
\begin{equation*}
\left(W^Tf\right)_n=u^f_{n,T}=\prod_{k=0}^{n-1}
a_kf_{T-n}+\sum_{s=n}^{T-1}w_{n,s}f_{T-s-1},\quad n=1,\ldots,T.
\end{equation*}
The following statement is equivalent to boundary controllability
of (\ref{Jacobi_dyn}).
\begin{lemma}
The operator $W^T$ is an isomorphism between
$\mathcal{F}^T$ and $\mathcal{H}^T$.
\end{lemma}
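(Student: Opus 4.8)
The plan is to read off the matrix of $W^T$ directly from the explicit Duhamel-type representation and to observe that, in a suitable ordering of the control components, this matrix is triangular with non-vanishing diagonal. Since both $\mathcal{F}^T$ and $\mathcal{H}^T$ coincide with $\mathbb{C}^T$, they have the same finite dimension, so it suffices to prove that $W^T$ is injective, equivalently that its matrix in appropriate bases is invertible.

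First I would examine the dependence of each component $\left(W^Tf\right)_n$ on the entries of the control $f=(f_0,\ldots,f_{T-1})$. From
\begin{equation*}
\left(W^Tf\right)_n=\prod_{k=0}^{n-1} a_kf_{T-n}+\sum_{s=n}^{T-1}w_{n,s}f_{T-s-1}
\end{equation*}
one sees that the $n$-th component involves only the controls $f_{T-n},f_{T-n-1},\ldots,f_0$, that the control of highest index appearing in it is exactly $f_{T-n}$, and that this leading control enters with coefficient $\prod_{k=0}^{n-1}a_k$, all remaining contributions coming with the kernel coefficients $w_{n,s}$.

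Next I would order the components of the control as $(f_{T-1},f_{T-2},\ldots,f_0)$ and write the matrix of $W^T$ with rows indexed by $n=1,\ldots,T$. With this ordering the observation above shows that the entries of row $n$ in the columns corresponding to $f_{T-1},\ldots,f_{T-n+1}$ all vanish, so the matrix is (lower) triangular; its diagonal entry in row $n$ is precisely $\prod_{k=0}^{n-1}a_k$. Since $a_k\neq 0$ for every $k\geqslant 0$ (by the standing assumptions $a_i\neq 0$ and $a_0\neq 0$), every diagonal entry is nonzero. A triangular matrix with nonzero diagonal is invertible, and reordering the control entries is merely a permutation of a basis, which preserves invertibility; hence $W^T$ is an isomorphism from $\mathcal{F}^T$ onto $\mathcal{H}^T$.

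There is no serious obstacle here: the whole argument reduces to the triangular structure forced by the finite speed of propagation encoded in the representation $(\ref{Jac_sol_rep})$. The only point requiring care is the index bookkeeping, namely verifying that $f_{T-n}$ is genuinely the highest-index control present in $\left(W^Tf\right)_n$ and that it carries the product $\prod_{k=0}^{n-1}a_k$; once this is checked, the off-diagonal terms $w_{n,s}$ play no role in the invertibility.
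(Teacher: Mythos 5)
Your argument is correct and is exactly the standard one: the paper itself states this lemma without proof (deferring to \cite{MM4}), and the triangular structure of the matrix of $W^T$ read off from (\ref{Jac_sol_rep}), with diagonal entries $\prod_{k=0}^{n-1}a_k\neq 0$, is precisely how it is established there. Your index bookkeeping (highest-index control in row $n$ being $f_{T-n}$, the kernel terms $w_{n,s}$ only contributing below the diagonal) checks out.
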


Along with the system (\ref{Jacobi_dyn}) we consider an auxiliary
system associated with the complex conjugate matrix $\overline A$:
\begin{equation}
\label{Jacobi_dyn_aux}
\begin{cases}
v_{n,t+1}+v_{n,t-1}-\overline{a_{n}}v_{n+1,t}-\overline{a_{n-1}}v_{n-1,t}-\overline{b_n}v_{n,t}=0,\quad n,t\in \mathbb{N},\\
v_{n,-1}=v_{n,0}=0,\quad n\in \mathbb{N}, \\
v_{0,t}=f_t,\quad t\in \mathbb{N}\cup\{0\}.
\end{cases}
\end{equation}
The objects corresponding to the system (\ref{Jacobi_dyn_aux}) are
marked with the symbol $\#$. Direct calculations show:
\begin{lemma}
The control and response operators of the system $\#$ a related
with control and response operators of the original system by the
relations
\begin{equation}
\label{adj_prop}
W^T_{\#}=\overline{W^T},\quad
R^T_{\#}=\overline{R^T},
\end{equation}
that is, the matrix of $W^T_{\#}$ and the response vector $r_{\#}$
are complex conjugate of the matrix of $W^T$ and the vector $r$.
\end{lemma}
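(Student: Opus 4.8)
The plan is to exploit the fact that complex conjugation carries the original system exactly onto the $\#$-system. First I would take the recursion, the initial conditions, and the boundary condition in (\ref{Jacobi_dyn}) and conjugate every relation. Since conjugation is additive and $\overline 0=0$, the conjugated quantities $\overline{u^f_{n,t}}$ satisfy
\[
\overline{u_{n,t+1}}+\overline{u_{n,t-1}}-\overline{a_n}\,\overline{u_{n+1,t}}-\overline{a_{n-1}}\,\overline{u_{n-1,t}}-\overline{b_n}\,\overline{u_{n,t}}=0,
\]
together with $\overline{u_{n,-1}}=\overline{u_{n,0}}=0$ and boundary value $\overline{u_{0,t}}=\overline{f_t}$. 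This is precisely the $\#$-system (\ref{Jacobi_dyn_aux}) driven by the control $\overline f$. Because (\ref{Jacobi_dyn}) is an explicit forward recursion in $t$, its solution is uniquely determined by the data, so I conclude that the solution of the $\#$-system with control $\overline f$ coincides with $\overline{u^f_{n,t}}$.

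Denote the solution of (\ref{Jacobi_dyn_aux}) with control $g$ by $u^{\#,g}_{n,t}$; the previous step then reads $u^{\#,\overline f}_{n,t}=\overline{u^f_{n,t}}$ for all $n,t$ and every control $f$. Invoking the definitions of the response and control operators of the $\#$-system, namely $(R^T_\# g)_t=u^{\#,g}_{1,t}$ and $(W^T_\# g)_n=u^{\#,g}_{n,T}$, and substituting $g=\overline f$, I obtain
\[
R^T_\#\,\overline f=\overline{R^T f},\qquad W^T_\#\,\overline f=\overline{W^T f}
\]
for every $f$.

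It remains to pass from these relations to the entrywise statement (\ref{adj_prop}). The elementary identity I would use is that for any matrix $M\in\mathbb{C}^{n\times n}$ and any vector $h$ one has $\overline M\,h=\overline{M\,\overline h}$, where $\overline M$ denotes the entrywise complex conjugate; this follows at once from $(\overline M\,h)_i=\sum_j\overline{M_{ij}}h_j=\overline{\sum_j M_{ij}\overline{h_j}}$. Writing $f=\overline h$ in $R^T_\#\overline f=\overline{R^T f}$ gives $R^T_\# h=\overline{R^T\overline h}=\overline{R^T}\,h$ for all $h$, hence $R^T_\#=\overline{R^T}$, and the same substitution yields $W^T_\#=\overline{W^T}$. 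Since the convolution kernel of $R^T$ is the response vector $r$, this in particular gives $r_\#=\overline r$.

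I expect the argument to be essentially bookkeeping, with no genuine obstacle. The single point requiring care is the meaning of the bar in (\ref{adj_prop}): it denotes the entrywise conjugate matrix rather than the adjoint, so the operator identities $R^T_\#\overline f=\overline{R^T f}$ must be converted through the substitution $f\mapsto\overline h$ and the identity $\overline M h=\overline{M\overline h}$, not read off directly. Alternatively, one could argue via the representation formula (\ref{Jac_sol_rep}): conjugating the Goursat problem for $w_{n,s}$ replaces $a_n,b_n$ by $\overline{a_n},\overline{b_n}$, so by uniqueness $w^\#_{n,s}=\overline{w_{n,s}}$ and $\prod_{k=0}^{n-1}\overline{a_k}=\overline{\prod_{k=0}^{n-1}a_k}$, which conjugates every entry of the matrices of $R^T$ and $W^T$ directly.
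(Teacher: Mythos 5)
Your proof is correct and follows the same route the paper intends: the paper offers no written proof beyond the phrase ``direct calculations show,'' and the natural direct calculation is exactly your conjugation of the recursion plus uniqueness of the forward solution, giving $u^{\#,\overline f}=\overline{u^f}$ and hence the entrywise-conjugate operator identities. You also correctly handle the one delicate point, namely converting the antilinear relation $R^T_{\#}\overline f=\overline{R^Tf}$ into the matrix statement $R^T_{\#}=\overline{R^T}$ via the substitution $f\mapsto\overline h$.
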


For systems (\ref{Jacobi_dyn}), (\ref{Jacobi_dyn_aux}) we
introduce the \emph{connecting operator} $C^T:
\mathcal{F}^T\mapsto \mathcal{F}^T$ via the bilinear form: for
arbitrary $f,g\in \mathcal{F}^T$ we define
\begin{equation}
\label{C_T_def} \left(C^T
f,g\right)_{\mathcal{F}^T}=\left(u^f_{\cdot,T},
v^g_{\cdot,T}\right)_{\mathcal{H}^T}=\left(W^Tf,W^T_\#g\right)_{\mathcal{H}^T}.
\end{equation}
The following statement is crucial for solving the dynamic inverse
problem:
\begin{theorem}
The connecting operator $C^T$ is an isomorphism in
$\mathcal{F}^T$, it admits the representation in terms of inverse
data:
\begin{equation}
\label{C_T_repr} C^T=a_0C^T_{ij},\quad
C^T_{ij}=\sum_{k=0}^{T-\max{i,j}}r_{|i-j|+2k},\quad r_0=a_0,
\end{equation}
\begin{equation*}
C^T=
\begin{pmatrix}
r_0+r_2+\ldots+r_{2T-2} & r_1+r_3+\ldots+r_{2T-3} & \cdot &
r_T+r_{T-2} &
r_{T-1}\\
r_1+r_3+\ldots+r_{2T-3} & r_0+r_2+\ldots+r_{2T-4} & \cdot & \ldots
&r_{T-2}\\
\cdot & \cdot & \cdot & \cdot & \cdot \\
r_{T-3}+r_{T-1}+r_{T+1} &\ldots & \cdot & r_1+r_3 & r_2\\
r_{T}+r_{T-2}&\ldots & \cdot &r_0+r_2&r_1 \\
r_{T-1}& r_{T-2}& \cdot & r_1 &r_0
\end{pmatrix}.
\end{equation*}
\end{theorem}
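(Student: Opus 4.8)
The statement has two parts, and both become transparent once $C^T$ is rewritten in terms of $W^T$ alone. \emph{Invertibility and symmetry.} Inserting the matrix identity $W^T_\#=\overline{W^T}$ from (\ref{adj_prop}) into the defining relation (\ref{C_T_def}) and using that $(f,g)_{\mathcal{F}^T}=\sum_k f_k\overline{g_k}$ is conjugate-linear in the second slot, the conjugations cancel and one finds, for all $f,g$,
\[
(C^Tf,g)_{\mathcal{F}^T}=\sum_{n=1}^{T}(W^Tf)_n(W^T\overline g)_n=f^\top (W^T)^\top W^T\,\overline g,
\]
and since the matrix $(W^T)^\top W^T$ is symmetric this forces $C^T=(W^T)^\top W^T$. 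This already settles the first claim: $C^T$ is visibly complex symmetric (which is what makes the Autonne--Takagi factorization of \S2 applicable), and since $W^T$ is an isomorphism by the controllability Lemma, $\det C^T=(\det W^T)^2\neq0$, so $C^T$ is an isomorphism of $\mathcal{F}^T$.

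\emph{The representation.} Write $u=u^f$ for the solution of (\ref{Jacobi_dyn}) and $v=v^g$ for the solution of the conjugate system (\ref{Jacobi_dyn_aux}), and put $P(s,t):=\sum_{n\geqslant1}u_{n,s}\overline{v_{n,t}}$, a finite sum since by (\ref{Jac_sol_rep}) both solutions propagate with unit speed ($u_{n,s}=0$ for $n>s$). By (\ref{C_T_def}) the quantity to compute is $(C^Tf,g)_{\mathcal{F}^T}=P(T,T)$. The engine is a discrete Blagoveshchenskii identity: multiplying the equation for $u$ by $\overline{v_{n,t}}$, the conjugate of the equation for $v$ by $u_{n,s}$, subtracting and summing over $n\geqslant1$, the $b_n$-terms cancel and a summation by parts telescopes the $a_n$-terms, leaving only boundary contributions at the left endpoint $n\in\{0,1\}$. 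Using $u_{0,s}=f_s$, $v_{0,t}=g_t$, $u_{1,s}=(R^Tf)_s$, $v_{1,t}=(R^T_\#g)_t$ this gives
\[
P(s+1,t)+P(s-1,t)-P(s,t+1)-P(s,t-1)=a_0\bigl(f_s\,\overline{(R^T_\#g)_t}-(R^Tf)_s\,\overline{g_t}\bigr)=:B(s,t).
\]
Since $R^T_\#=\overline{R^T}$ (so $r_\#=\overline r$), the source $B$ is a fixed bilinear form in $(f,\overline g)$ whose coefficients are the entries $r_k$ of the response vector via the convolution formula (\ref{R_def}).

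It remains to solve this lattice wave equation for $P$. The Cauchy data vanish, $P(s,0)=P(s,-1)=0$ (because $v_{n,0}=v_{n,-1}=0$), as does the lateral datum $P(0,t)=0$; marching in $t$ and using that $B$ is finitely supported then expresses $P(T,T)=\sum_{s,t}\kappa_{s,t}\,B(s,t)$ as a finite signed sum over a triangular index set, the integer weights $\kappa_{s,t}$ being read off by iterating the recurrence. Substituting the convolution form of $B$ and collecting the coefficient of $f_{j-1}\overline{g_{i-1}}$ produces $a_0\sum_{k=0}^{T-\max\{i,j\}}r_{|i-j|+2k}$, i.e. exactly the asserted entry $a_0\,C^T_{ij}$: the two terms of $B$ supply the two families of characteristics, the stepping along the cone generates the index increment $2k$, the offset $|i-j|$ measures the distance of $(s,t)$ from the diagonal, and the truncation $k\leqslant T-\max\{i,j\}$ comes from the length of the cone inside the support.

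I expect this last step to be the only real obstacle. Deriving the Blagoveshchenskii identity is routine, but converting the solution of the lattice wave equation into the precise Toeplitz-plus-Hankel entries requires careful bookkeeping of the weights $\kappa_{s,t}$ and of the summation ranges, in particular verifying that all intermediate terms cancel so that exactly the indices $|i-j|,|i-j|+2,\dots$ up to the cutoff survive. A clean way to organize this is to test against the basis controls $f=\epsilon_j$, $g=\epsilon_i$ of $\mathcal{F}^T$ and track the finitely many nonzero sources directly, or, equivalently, to pass to the characteristic variables $s\pm t$, in which the identity becomes a first-order recurrence that telescopes.
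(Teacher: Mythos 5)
The paper itself does not prove this theorem: Section~3 explicitly imports it from \cite{MM4}, so there is no in-paper argument to compare against. Your proposal reconstructs what is in fact the standard Boundary Control derivation used in that reference (and in \cite{MM2} for the real case), so the approach is the intended one. The first half is complete and correct: from (\ref{C_T_def}) and $W^T_\#=\overline{W^T}$ one gets $\overline{(W^T_\#g)_n}=(W^T\overline g)_n$, hence $C^T=(W^T)^\top W^T$, which is complex symmetric and invertible because $W^T$ is an isomorphism; this is exactly the content of the first claim and of the Remark following the theorem. Your discrete Blagoveshchenskii identity is also correct as stated: the $b_n$-terms cancel, the $a_n$-terms telescope, and the source is $a_0\bigl(f_s\,\overline{(R^T_\#g)_t}-(R^Tf)_s\,\overline{g_t}\bigr)$ with $\overline{(R^T_\#g)_t}=(R^T\overline g)_t$, so everything is bilinear in $(f,\overline g)$ with coefficients from the response vector, and the data $P(s,0)=P(s,-1)=P(0,t)=0$ are right.

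The one substantive shortfall is that the decisive step --- solving the lattice wave equation for $P$ and showing that the coefficient of $f_{j-1}\overline{g_{i-1}}$ in $P(T,T)$ is exactly $a_0\sum_{k=0}^{T-\max\{i,j\}}r_{|i-j|+2k}$ --- is described but not executed; as written this is a plan, not a proof. It is not a gap of principle (the computation does close: e.g.\ for $T=2$ one checks directly from (\ref{Jac_sol_rep}) and the Goursat data that the entries are $a_0(r_0+r_2)$, $a_0r_1$, $a_0r_0$), but to finish you must either carry out the characteristic-variable telescoping you sketch, or argue by induction on $T$ using $C^T_{ij}-C^{T-1}_{ij}=u^f_{?}\,$-type increments; until the weights $\kappa_{s,t}$ are pinned down and the cancellation to the indices $|i-j|,|i-j|+2,\ldots$ with the cutoff $k\leqslant T-\max\{i,j\}$ is verified, the representation (\ref{C_T_repr}) is asserted rather than proved. (A minor aside: the Autonne--Takagi factorization in Section~2 is applied to $A^N$, not to $C^T$; the complex symmetry of $C^T$ is recorded in the Remark but is not what Section~2 uses.)
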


The relations (\ref{adj_prop}) imply the following
\begin{remark}
The connecting operator is complex symmetric:
\begin{equation*}
\left(C^T\right)^*=\overline{C^T}, \quad \text{or} \quad
\left(C^T\right)^\top=C^T.
\end{equation*}
\end{remark}

\subsection{Inverse problem. }

Due to the finite speed of wave propagation in (\ref{Jacobi_dyn})
the solution $u^f$ depends on the coefficients $a_n,b_n$ as
follows:
\begin{remark}
\label{Rem1} For $M\in \mathbb{N}$, $u^f_{M-1,M}$ depends on
$\{a_0,\ldots,a_{M-1}\}$, $\{b_1,\ldots,b_{M-1}\}$, the response
$R^{2T}$ (or, what is equivalent, the response vector
$(r_0,r_1,\ldots,r_{2T-2})$) depends on $\{a_0,\ldots,a_{T-1}\}$,
$\{b_1,\ldots,b_T\}$.
\end{remark}
Thus the natural set up of the dynamic IP for (\ref{Jacobi_dyn}):
by the given operator $R^{2T}$ to recover $\{a_0,\ldots,a_{T-1}\}$
and $\{b_1,\ldots,b_{T-1}\}$.

We also note that $a_0=r_0$, which follows from (\ref{R_def}).

In \cite{MM4} the authors proposed two methods for recovering the
coefficients $(a_k)^2$, $b_k,$ $k=1,\ldots$. To recover $a_k$ it
is necessary to use additional information, such as the sequence
of signs. Note that the results obtained for dynamic inverse data
in \cite{MM4} corresponds to results obtained for spectral inverse
data in \cite{G}.

Note that the impossibility of recovering $a_k$ is not the weak
point of the method, but a feature of the problem:
\begin{theorem}
\begin{itemize}
\item[1)] For $f\in \mathcal{F}$ the value $u^f_{n,t}$ is odd with
respect to $a_1,a_2,\ldots a_{n-1}$ and even with respect to
$a_n,a_{n+1},\ldots$.

\item[2)] The response vector depends on $\left(a_1\right)^2,
\left(a_2\right)^2,\ldots$
\end{itemize}
\end{theorem}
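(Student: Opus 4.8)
The plan is to exhibit a sign-gauge symmetry of the system (\ref{Jacobi_dyn}) and to read off both assertions from it. First I would fix an arbitrary sign sequence $\varepsilon=(\varepsilon_0,\varepsilon_1,\ldots)$ with $\varepsilon_k\in\{+1,-1\}$ and $\varepsilon_0=1$, and set $\tilde u_{n,t}:=\varepsilon_n u^f_{n,t}$. Substituting $u_{n,t}=\varepsilon_n\tilde u_{n,t}$ into the difference equation and dividing by $\varepsilon_n$ (using $\varepsilon_n^2=1$) shows that $\tilde u$ solves the \emph{same} system but with the off-diagonal coefficients replaced by $\tilde a_k=\varepsilon_k\varepsilon_{k+1}a_k$, the diagonal entries $b_n$ and the zero initial data being untouched, and the boundary value $\tilde u_{0,t}=f_t$ preserved because $\varepsilon_0=1$. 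Since the explicit time-stepping makes the solution unique, this yields, writing $u^f_{n,t}[\{a_k\}]$ for the solution viewed as a function of the coefficients,
\[
u^f_{n,t}[\{\varepsilon_k\varepsilon_{k+1}a_k\}]=\varepsilon_n\,u^f_{n,t}[\{a_k\}].
\]

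For part 1) I would specialize this gauge so as to flip a single coefficient. Fixing $m\geq 1$ and taking $\varepsilon_k=1$ for $k\leq m$ and $\varepsilon_k=-1$ for $k>m$, the map $a_k\mapsto\varepsilon_k\varepsilon_{k+1}a_k$ changes only the sign of $a_m$ and leaves $a_0$ fixed (since $\varepsilon_0\varepsilon_1=1$). The displayed identity then reads $u^f_{n,t}[\ldots,-a_m,\ldots]=\varepsilon_n\,u^f_{n,t}[\ldots,a_m,\ldots]$ with $\varepsilon_n=+1$ for $n\leq m$ and $\varepsilon_n=-1$ for $n>m$. Hence $u^f_{n,t}$ is unchanged under $a_m\mapsto-a_m$ whenever $m\geq n$, i.e. it is even with respect to $a_n,a_{n+1},\ldots$, and it changes sign whenever $m\leq n-1$, i.e. it is odd with respect to $a_1,\ldots,a_{n-1}$. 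This is exactly the assertion.

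For part 2) I would recall from (\ref{con11}) that the response vector is $r_{t-1}=u^\delta_{1,t}$, that is, the trace at the first node $n=1$ of the solution with control $\delta$. Applying part 1) with $n=1$ shows that $u^\delta_{1,t}$ is even with respect to every $a_k$ with $k\geq 1$. Since the time-stepping recursion, equivalently the representation (\ref{Jac_sol_rep}) together with the Goursat problem for $w_{n,s}$, expresses $u^f_{n,t}$ as a polynomial in $\{a_k\}$ and $\{b_k\}$, being even in each $a_k$ separately forces only even powers of each $a_k$ to appear; therefore $r$ is a function of $a_1^2,a_2^2,\ldots$ (together with $a_0$ and the $b_k$).

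I expect the only delicate point to be the bookkeeping in the first step: one must verify that the substitution genuinely reproduces a Jacobi system, i.e. that the transformed coefficient multiplying $\tilde u_{n+1,t}$ in the $n$-th equation and the one multiplying $\tilde u_{n,t}$ in the $(n+1)$-th equation coincide, so that $\tilde a_k=\varepsilon_k\varepsilon_{k+1}a_k$ is well defined, and that the boundary contribution at $n=0$ is preserved precisely because $\varepsilon_0=1$. Everything else is a direct specialization of the gauge identity together with the elementary observation that $u^f_{n,t}$ is polynomial in the coefficients.
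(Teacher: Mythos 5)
Your argument is correct and complete. The paper itself states this theorem without proof (it is quoted from \cite{MM4}, where the result is presumably established by induction through the recursion or the Goursat problem for $w_{n,s}$), so there is no in-paper proof to compare against; but your sign-gauge conjugation $u_{n,t}\mapsto\varepsilon_n u_{n,t}$, $a_k\mapsto\varepsilon_k\varepsilon_{k+1}a_k$ is a clean global symmetry argument that delivers both parts at once. The key checks all go through: the transformed system is again of the form (\ref{Jacobi_dyn}) because the coefficient $\varepsilon_k\varepsilon_{k+1}a_k$ arises consistently in the $k$-th and $(k+1)$-th equations; the boundary and initial data are preserved since $\varepsilon_0=1$ and $u_{n,-1}=u_{n,0}=0$; and uniqueness of the explicitly time-stepped solution justifies identifying $\varepsilon_n u^f_{n,t}[\{a_k\}]$ with $u^f_{n,t}[\{\varepsilon_k\varepsilon_{k+1}a_k\}]$. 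Your choice $\varepsilon_k=1$ for $k\le m$, $\varepsilon_k=-1$ for $k>m$ isolates the flip of a single $a_m$ while fixing $a_0$, giving exactly the parity statement of part 1). For part 2), the passage from ``even in each $a_k$ separately'' to ``polynomial in the $a_k^2$'' is legitimate because $u^\delta_{1,t}$ depends on only finitely many coefficients (finite propagation speed, Remark \ref{Rem1}) and is a polynomial in them, so evenness kills all monomials of odd degree in any $a_k$. Compared with a direct inductive verification on the recursion, your approach trades coefficient bookkeeping for a single structural observation, and it yields the slightly stronger statement that $u^f_{n,t}$ is odd/even in each $a_m$ individually, not merely jointly.
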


Now we set up a question: can one determine whether a vector
$(r_0,r_1,r_2,\ldots,r_{2T-2})$ is a response vector for dynamical
system (\ref{Jacobi_dyn}) with some $(a_0,\ldots,a_{T-1})$
$(b_1,\ldots,b_{T-1})$? The answer is the following theorem.
\begin{theorem}
\label{Th_char} The  vector $(r_0,r_1,r_2,\ldots,r_{2T-2})$ is a
response vector for the dynamical system (\ref{Jacobi_dyn}) if and
only if the complex symmetric matrix $C^{T-k}$, $k=0,1,\ldots,T-1$
constructed by (\ref{C_T_repr}) is an isomorphism in
$\mathcal{F}^{T-k}$.
\end{theorem}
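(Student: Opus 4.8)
The plan is to prove both directions of the characterization by exploiting the explicit dependence of the connecting operator on the response vector given in \eqref{C_T_repr}, together with the representation formulas from the previous section.

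First I would establish the \emph{necessity} direction, which is nearly immediate. If $(r_0,\ldots,r_{2T-2})$ is a genuine response vector for some admissible coefficients $(a_0,\ldots,a_{T-1})$, $(b_1,\ldots,b_{T-1})$ with $a_k\neq 0$, then each truncated response vector $(r_0,\ldots,r_{2(T-k)-2})$ is the response vector of the same system observed up to time $T-k$, and by \eqref{C_T_def} the associated connecting operator $C^{T-k}$ is built from $W^{T-k}$ and $W^{T-k}_{\#}$. Since $W^{T-k}$ is an isomorphism by the controllability lemma (Lemma on $W^T$) and $W^{T-k}_{\#}=\overline{W^{T-k}}$ by \eqref{adj_prop}, the bilinear form in \eqref{C_T_def} is nondegenerate, so $C^{T-k}$ is an isomorphism for every $k=0,\ldots,T-1$. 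The key point here is simply that the matrix formula \eqref{C_T_repr} for $C^{T-k}$ uses only the entries $r_0,\ldots,r_{2(T-k)-2}$, so invertibility of all these nested matrices is a necessary consequence.

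The substance of the theorem is the \emph{sufficiency} direction, which I would prove by induction on $T$ (equivalently, by reconstructing the coefficients layer by layer). Assuming each $C^{T-k}$ is an isomorphism, the plan is to run the Boundary Control reconstruction procedure: the invertibility of $C^T$ lets one solve the control equations that determine the special controls steering the system to the inner-space basis vectors, and the entries of $C^T$ together with the response vector then yield the coefficients $a_0^2,\ldots,a_{T-1}^2$ and $b_1,\ldots,b_{T-1}$ through the Krein-type recurrence. Concretely, one recovers $a_0=r_0$ directly, and then uses the nested invertibility of $C^{T-k}$ for decreasing size to extract $b_k$ and $(a_k)^2$ one step at a time; the complex-symmetry relation $(C^{T-k})^\top=C^{T-k}$ from the Remark guarantees the reconstructed coefficients are consistent with a symmetric (complex) Jacobi matrix. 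The final task is to verify that the coefficients so produced actually generate, via the forward problem \eqref{Jacobi_dyn} and the Duhamel representation \eqref{Jac_sol_rep}, precisely the prescribed vector $(r_0,\ldots,r_{2T-2})$; this closes the loop because the map from coefficients to response vector and the reconstruction map are mutually inverse on the admissible set.

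The hard part will be controlling the reconstruction when some $a_k$ could vanish or when the intermediate Jacobi blocks degenerate: the whole scheme requires $a_k\neq 0$ at each stage, and it is exactly the chain of isomorphism conditions on $C^{T-k}$ that must be shown to force $(a_k)^2\neq 0$. I would therefore make the induction step establish simultaneously that (i) the $k$-th reconstructed coefficients are well-defined and (ii) the corresponding reduced connecting operator coincides with the matrix $C^{T-k-1}$ built from the truncated data, so that the inductive hypothesis applies at the next level. Showing that invertibility of the smaller matrix is \emph{automatically} inherited from the data—rather than being an extra assumption—is the delicate bookkeeping point, and it is precisely why the theorem demands the entire family $C^{T-k}$, $k=0,\ldots,T-1$, to be isomorphisms rather than just $C^T$ alone.
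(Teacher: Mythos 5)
The paper itself does not prove Theorem~\ref{Th_char}: Section~3 explicitly only outlines results imported from \cite{MM4}, so there is no in-text argument to compare against. Your plan follows the route that the Boundary Control literature (and \cite{MM4}) takes. The necessity half as you present it is complete and correct: by \eqref{adj_prop} one has $(W^{T}_{\#})^{*}=(W^{T})^{\top}$, so \eqref{C_T_def} yields the factorization $C^{T-k}=(W^{T-k})^{\top}W^{T-k}$, a product of isomorphisms by the controllability lemma, and by Remark~\ref{Rem1} the matrix built from the truncated data via \eqref{C_T_repr} coincides with the connecting operator of the same system at time $T-k$. That direction needs nothing more.

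The sufficiency half, however, is a program rather than a proof. The two claims that carry all the weight --- (i) that invertibility of the entire nested family $C^{T-k}$, $k=0,\ldots,T-1$, forces the reconstructed $(a_k)^2$ to be well defined and nonzero at every stage, and (ii) that the system assembled from the recovered coefficients actually reproduces the prescribed $(r_0,\ldots,r_{2T-2})$ --- are announced as goals, not established; you yourself label (i) ``the delicate bookkeeping point'' and then stop. In the BC scheme these facts come out of explicit formulas: the special controls are obtained by solving linear systems with matrices $C^{T-k}$, the quantities $(a_k)^2$ and $b_k$ appear as ratios built from determinants of the nested blocks (so nonvanishing is read off directly from the hypothesis), and the closing verification uses the representation \eqref{Jac_sol_rep} to check $u^{\delta}_{1,t}=r_{t-1}$. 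Until those formulas are written down (either rederived or quoted precisely from \cite{MM4}) and the two claims are checked against them, the ``if'' direction of the theorem remains unproved. In short: right architecture, necessity done, sufficiency still owed.
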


\section{Complex moment problem and dynamic inverse problem. }

In what follows we will assume that additional parameter $a_0=1$.

With the dynamical system (\ref{Jacobi_dyn_int}) one can also
associate the control, response and connecting operators $W^T_N,$
$R^T_N$ and $C^T_N$ with the same formulas (\ref{Control_op}),
(\ref{Resp_op}), (\ref{C_T_def}), where instead of $u^f$ one
should use $v^f$.

Remark \ref{Rem1} in particular implies that
\begin{eqnarray}
\label{R_eqv} R^{2N-2} = R^{2N-2}_{N},\\
u^f_{n,\,t}=v^f_{n,\,t},\quad n\leqslant t\leqslant
N,\notag\\
W^T=W^T_{N}, \quad C^T=C^T_N, \quad  T\leqslant N.\notag
\end{eqnarray}

By choosing the special control $f=\delta=(1,0,0,\ldots)$, the
kernel of a response operator can be determined as (cf.
(\ref{con11})):
\begin{equation*}
\left(R^T_N\delta\right)_t=v^\delta_{1,t}= r^N_{t-1},\quad
t=1,2,\ldots.
\end{equation*}
Note that from (\ref{R_eqv}) it follows that
\begin{equation*}
r_t=r_t^N,\quad t=0,1,\ldots,2N-2.
\end{equation*}
Thus for special control $f=\delta$, using (\ref{Repr2}),
(\ref{CT_coeff}), (\ref{T_coeff}) one have that:
\begin{equation}
\label{Repr6} r_{t-1}^N=v^\delta_{1,t}= \sum_{k=1}^N
c^k_t\overline {u^k_1}=\sum_{k=1}^N
c^k_t=\sum_{k=1}^N\frac{1}{\rho_k}T_t(\omega_k),\quad
t=1,2,\ldots,
\end{equation}
where $\rho_k$ and $\omega_k$ are defined in (\ref{Rho}) and
(\ref{Omega}).

We introduce a discrete measure $d\rho^N$ on $\mathbb{C}$,
concentrated on the set of points
$\left\{\omega_k\right\}_{k=1}^N$, by definition we set
\begin{equation}
\label{MeasureR} d\rho^N(\{\omega_k\})=\frac{1}{\rho_k},
\end{equation}
so that at points $\omega_k$ it has weights $\frac{1}{\rho_k}$.
Then we can rewrite (\ref{Repr6}) in a form which resembles the
spectral representation of dynamic inverse data (see
\cite{MM2,MM3,MM5}):
\begin{proposition}
The dynamic response vector of the system (\ref{Jacobi_dyn_int})
admits the following representation:
\begin{equation}
\label{Resp_vect_N}
r_{t-1}^N=\int_{\mathbb{C}}T_t(\lambda)\,d\rho^N(\lambda),\quad
t=1,2,\ldots.
\end{equation}
\end{proposition}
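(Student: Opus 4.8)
The plan is to observe that the Proposition is essentially a repackaging of the finite-sum identity \eqref{Repr6}, which has already been established in the text, as an integral against the discrete measure $d\rho^N$ defined in \eqref{MeasureR}. No genuinely new computation is required; all of the real work was carried out in Section~2 when deriving the Fourier-type representation, so the proof will be short.

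First I would recall, for the special control $f=\delta=(1,0,0,\ldots)$ and under the standing assumption $a_0=1$, the chain of equalities in \eqref{Repr6}. Since $f_l=\delta_{l,0}$, the coefficient formula \eqref{CT_coeff} collapses to a single surviving term, giving $c^k_t=\frac{1}{\rho_k}T^{(k)}_t$, and by \eqref{T_coeff} the sequence $T^{(k)}_t$ is exactly the Chebyshev polynomial $T_t(\omega_k)$; note this identification persists for all $t\geqslant 1$, not only in the range appearing in \eqref{T_coeff}. Using the normalization $u^k_1=1$ (so $\overline{u^k_1}=1$), the representation \eqref{Repr2} evaluated at $n=1$ yields $r_{t-1}^N=v^\delta_{1,t}=\sum_{k=1}^N c^k_t=\sum_{k=1}^N\frac{1}{\rho_k}T_t(\omega_k)$.

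Next I would invoke the definition of integration against a discrete measure. Because $d\rho^N$ is concentrated on the finite point set $\{\omega_k\}_{k=1}^N$ with masses $d\rho^N(\{\omega_k\})=\frac{1}{\rho_k}$, for any continuous $g$ on $\mathbb{C}$ one has $\int_{\mathbb{C}} g(\lambda)\,d\rho^N(\lambda)=\sum_{k=1}^N\frac{1}{\rho_k}\,g(\omega_k)$. Specializing to $g=T_t$ reproduces precisely the right-hand side of \eqref{Repr6}, and comparing the two expressions establishes \eqref{Resp_vect_N}.

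The only point requiring a word of care is the possible coincidence of some of the $\omega_k$: although the $d_i$ were arranged to be pairwise distinct in \eqref{Factor}, the quantities $\omega_i$ from \eqref{Omega} need not be. I would dispose of this by reading \eqref{MeasureR} as assigning to each point $\lambda$ of the support the total mass $\sum_{k:\,\omega_k=\lambda}\frac{1}{\rho_k}$; with this convention the integral still equals $\sum_{k=1}^N\frac{1}{\rho_k}T_t(\omega_k)$, so the identity is unaffected. This bookkeeping is the only mild obstacle, and it is purely notational rather than substantive.
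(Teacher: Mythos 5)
Your proposal is correct and follows essentially the same route as the paper: the Proposition is indeed just the identity \eqref{Repr6} (derived via \eqref{Repr2}, \eqref{CT_coeff}, \eqref{T_coeff} for the control $f=\delta$) rewritten as an integral against the discrete measure \eqref{MeasureR}, which is exactly how the paper presents it, with no separate proof given. Your additional remark about possibly coincident $\omega_k$ (handled by summing the weights $1/\rho_k$ over coinciding points) is a reasonable piece of bookkeeping that the paper glosses over, but it does not change the substance of the argument.
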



With a set of moments (\ref{Moment_eq}) we associate the following
Hankel matrices:
\begin{equation}
\label{Hankel}
S^n:=\begin{pmatrix} s_{2n-2} & s_{2n-3} & \ldots & s_{n-1}\\
s_{2n-3} & \ldots & \ldots & \ldots\\
\cdot & \cdot & \ldots & s_{1} \\
s_{n-1} & \ldots & s_{1} & s_{0}
\end{pmatrix},\quad n=2,3,\ldots.
\end{equation}
We also introduce the matrix $J_n\in \mathbb{C}^{n\times n}$
\begin{equation*}
J_n=\begin{pmatrix}
0 & 0 & 0 & \ldots & 1\\
0 & 0 & 0 & \ldots  & 0\\
\cdot & \cdot & \cdot & \cdot &  \cdot \\
0 & \ldots & 1& 0  & 0\\
\cdot & \cdot & \cdot & \cdot &  \cdot \\
1 & 0 & 0 & 0 &  0
\end{pmatrix},\quad n=2,3,\ldots.
\end{equation*}

In \cite{MM3,MM5} the authors obtained the following
\begin{proposition}
\label{Prop_resp_moments} The elements of the response vector
(\ref{Resp_vect_N}) of the system (\ref{Jacobi_dyn_int}) are
related to the moments
$s_k=\int_{\mathbb{C}}\lambda^k\,d\rho^N(\lambda)$ by the
following rule:
\begin{equation}
\label{Resp_mom_relat}
\begin{pmatrix}
r_0^N\\
r_1^N\\
\ldots \\
r_{n-1}^N
\end{pmatrix}=\Lambda_n\begin{pmatrix}
s_0\\
s_1\\
\ldots \\
s_{n-1}
\end{pmatrix},\quad n=1,2,\ldots,2N-1.
\end{equation}
where the entries of the matrix $\Lambda_n\in \mathbb{R}^{n\times
n}$ are given by
\begin{equation*}
\{\Lambda_n\}_{ij}=a_{ij}=\begin{cases} 0,\quad \text{if $i<j$},\\
0,\quad \text{if $i+j$ is odd,}\\
E_{\frac{i+j}{2}}^j(-1)^{\frac{i+j}{2}+j},\quad \text{otherwise}
\end{cases}
\end{equation*}
where $E_n^k$ are binomial coefficients.

The following relation holds:
\begin{equation*}
C^N=\widetilde\Lambda_N S^N
\left(\widetilde\Lambda_N\right)^*,\quad
\widetilde\Lambda_N:=J_N\Lambda_N J_N.
\end{equation*}

\end{proposition}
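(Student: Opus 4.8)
The plan is to treat the two assertions separately, with the discrete measure $d\rho^N$ from (\ref{MeasureR}) together with the spectral form (\ref{Resp_vect_N}) of the response vector serving as the common bridge between response data, moments and the connecting operator. For the first assertion I would begin with (\ref{Resp_vect_N}), namely $r^N_{i-1}=\int_{\mathbb C}T_i(\lambda)\,d\rho^N(\lambda)$, and expand each $T_i$ in the monomial basis. Viewed as polynomials in $\omega=\lambda$, the recurrence (\ref{T_coeff}) reads $T_{i+1}=\lambda T_i-T_{i-1}$ with $T_1=1$, $T_2=\lambda$, so the $T_i$ are rescaled Chebyshev polynomials of the second kind, $T_i(\lambda)=U_{i-1}(\lambda/2)$, whose monomial coefficients are classical. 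Writing $T_i(\lambda)=\sum_{j=1}^{n}\{\Lambda_n\}_{ij}\lambda^{\,j-1}$ and integrating against $d\rho^N$ gives $r^N_{i-1}=\sum_{j=1}^{n}\{\Lambda_n\}_{ij}s_{j-1}$, which is exactly (\ref{Resp_mom_relat}). The support pattern ($\{\Lambda_n\}_{ij}=0$ for $i<j$ and for $i+j$ odd) is immediate from $\deg T_i=i-1$ and the fixed parity of $T_i$, while the explicit binomial value follows either from the closed form for Chebyshev coefficients or by an induction on $i$ from the three-term recurrence; validity for all $n\leqslant 2N-1$ uses the equality $r_t=r^N_t$ for $t\leqslant 2N-2$ recorded before (\ref{Repr6}).

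For the second assertion the key point is that the change of basis from the first part converts the moment Hankel matrix into a Gram matrix of the $T_i$ with respect to $d\rho^N$. First I would remove the reversal matrices: since $J_N$ is real, symmetric and involutive and $\Lambda_N$ is real, one has $\widetilde\Lambda_N S^N\widetilde\Lambda_N^{*}=J_N(\Lambda_N\widehat S^N\Lambda_N^{\top})J_N$, where $\widehat S^N:=J_N S^N J_N$ has entries $\{\widehat S^N\}_{ab}=s_{a+b-2}$. Using that $\{\Lambda_N\}_{ia}$ is the coefficient of $\lambda^{a-1}$ in $T_i$ and that $s_{a+b-2}=\int_{\mathbb C}\lambda^{a+b-2}\,d\rho^N$, interchanging the finite sums with the integral yields $\{\Lambda_N\widehat S^N\Lambda_N^{\top}\}_{ij}=\int_{\mathbb C}T_i(\lambda)T_j(\lambda)\,d\rho^N(\lambda)$.

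It then remains to evaluate this Gram entry and match it with (\ref{C_T_repr}). For this I would use the linearization formula $T_iT_j=\sum_{k=0}^{\min(i,j)-1}T_{|i-j|+2k+1}$, itself a consequence of $U_aU_b=\sum_{k=0}^{\min(a,b)}U_{a-b+2k}$ for the second-kind polynomials; integrating term by term and using $\int_{\mathbb C}T_{m+1}\,d\rho^N=r_m$ gives $\{\Lambda_N\widehat S^N\Lambda_N^{\top}\}_{ij}=\sum_{k=0}^{\min(i,j)-1}r_{|i-j|+2k}$. On the other hand, reading (\ref{C_T_repr}) at the reversed indices and using $\max(N+1-i,N+1-j)=N+1-\min(i,j)$ gives $\{J_N C^N J_N\}_{ij}=\{C^N\}_{N+1-i,\,N+1-j}=\sum_{k=0}^{\min(i,j)-1}r_{|i-j|+2k}$. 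Hence $\Lambda_N\widehat S^N\Lambda_N^{\top}=J_N C^N J_N$, and conjugating back by $J_N$ yields $C^N=\widetilde\Lambda_N S^N\widetilde\Lambda_N^{*}$.

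I expect the main obstacle to be not any single computation but the reconciliation of the two summation ranges: the connecting operator (\ref{C_T_repr}) truncates its $r$-sum at $N-\max(i,j)$, whereas the Chebyshev linearization naturally produces a sum running up to $\min(i,j)-1$. The conjugation by $J_N$ is precisely what turns one range into the other, so the delicate part is the index bookkeeping around $J_N$, together with the parity conventions built into $\Lambda_n$; the analytic steps, interchanging sum and integral and linearizing the product of Chebyshev polynomials, are routine once the identity $\{\Lambda_N\widehat S^N\Lambda_N^{\top}\}_{ij}=\int_{\mathbb C}T_iT_j\,d\rho^N$ is in place.
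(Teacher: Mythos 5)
Your proposal is correct, but there is nothing in the paper to compare it against: the paper states Proposition~\ref{Prop_resp_moments} with the preamble ``In \cite{MM3,MM5} the authors obtained the following'' and gives no proof at all. Your argument is a valid self-contained derivation built exactly from the ingredients the paper assembles: the spectral representation (\ref{Resp_vect_N}), the recurrence (\ref{T_coeff}), and the explicit form (\ref{C_T_repr}) of the connecting operator. Two points are worth flagging. First, you correctly note that despite the paper's labelling of the $T_t$ as Chebyshev polynomials of the \emph{first} kind, the initial data $T_0=0$, $T_{-1}=-1$ force $T_1=1$, $T_2=\lambda$, so that $T_i(\lambda)=U_{i-1}(\lambda/2)$; this identification is what makes both the triangular/parity structure of $\Lambda_n$ and the linearization $T_iT_j=\sum_{k=0}^{\min(i,j)-1}T_{|i-j|+2k+1}$ available, and the sign $(-1)^{\frac{i+j}{2}+j}=(-1)^{\frac{i-j}{2}}$ and binomial entry do match the closed form for the $U$-coefficients. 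Second, your anticipated ``main obstacle'' — reconciling the upper limit $N-\max(i,j)$ in (\ref{C_T_repr}) with the limit $\min(i,j)-1$ from the linearization — resolves exactly as you describe, since $\max(N+1-i,N+1-j)=N+1-\min(i,j)$ and $|i-j|$ is invariant under the index reversal, while the realness of $\Lambda_N$ and $J_N$ lets you replace $\widetilde\Lambda_N^{*}$ by $\widetilde\Lambda_N^{\top}$. So the proof is complete as written; it is very plausibly the same computation carried out in \cite{MM3,MM5} for the real case, transplanted to the complex-symmetric setting.
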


Thus, we arrived at the following procedure for solving the
truncated complex moment problem, i.e. the problem of finding a
measure that satisfies a finite number of moment equalities
(\ref{Moment_eq}) for $s_0,s_1,\ldots s_{2N-2}\in \mathbb{C}$:
\begin{procedure}
\label{Proc} The solution to the truncated moment problem can be
constructed by performing the following steps
\begin{itemize}
\item[1)] Go from $s_0,s_1,\ldots s_{2N-2}$ to $r_0,\ldots,
r_{2N-2}$ using the formula (\ref{Resp_mom_relat}).

\item[2)] Check if $r_0,r_1,\ldots r_{2N-2}$ satisfy the
conditions of Theorem \ref{Th_char}, then the elements of the
matrix $A^N$ can be calculated using the formulas from \cite{MM4}.

\item[3)] Calculate parameters $d_i$, $\rho_i$, $H_{ki}$,
$\omega_i$ using factorization (\ref{Factor}) and formulas
(\ref{EigVal}), (\ref{Rho}), (\ref{HKL}), (\ref{Omega}).

\item[4)] Construct a measure that solves the truncated moment
problem by (\ref{MeasureR})

\end{itemize}
\end{procedure}

The possibility of constructing a solution to the truncated
complex moment problem, described above, can be used as a basis
for the following
\begin{theorem}
If the moments $s_0,s_1,\ldots$ are such that the Hankel matrix
$S^T$ (\ref{Hankel}) is non-singular for every $T\in N$, there
exist a solution to the complex moment problem (\ref{Moment_eq}).
\end{theorem}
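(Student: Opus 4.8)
The plan is to use the truncated-problem machinery (Procedure 1 and the Propositions preceding it) to produce, for each $T$, a discrete measure $d\rho^T$ solving the truncated moment problem up to order $2T-2$, and then to extract a weak-* convergent subsequence whose limit solves the full problem. The key observation is that the hypothesis — $S^T$ nonsingular for every $T$ — is exactly what feeds into the chain of isomorphisms used in the construction. First I would verify that nonsingularity of $S^T$ lets us run the whole procedure: by the relation $C^N=\widetilde\Lambda_N S^N(\widetilde\Lambda_N)^*$ of Proposition \ref{Prop_resp_moments}, and since $\widetilde\Lambda_N=J_N\Lambda_N J_N$ is invertible (it is a product of the involution $J_N$ with the triangular matrix $\Lambda_N$, whose diagonal entries $E^j_j(-1)^{2j}=1$ are nonzero), the matrix $C^N$ is an isomorphism iff $S^N$ is nonsingular. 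More is needed: Theorem \ref{Th_char} requires $C^{T-k}$ to be an isomorphism for all $k=0,\dots,T-1$, i.e. every $S^{T-k}$ nonsingular, which is guaranteed precisely because the hypothesis is assumed for every index. Hence for each $N$ the vector $(r_0,\dots,r_{2N-2})$ obtained from step 1 is a genuine response vector, the Jacobi block $A^N$ exists, and steps 3--4 produce a discrete measure $d\rho^N$ concentrated on $\{\omega_k\}_{k=1}^N$.

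Next I would pin down in what sense these $d\rho^N$ solve the truncated problem and how they relate to the prescribed moments. By construction the $\omega_k$ and weights $1/\rho_k$ are manufactured so that the response vector of \eqref{Jacobi_dyn_int} equals $(r_0,\dots,r_{2N-2})$, and by the representation \eqref{Resp_vect_N} together with Proposition \ref{Prop_resp_moments} (read backwards via $\Lambda_n^{-1}$) the moments of $d\rho^N$ against $\lambda^k$ reproduce the given $s_k$ for $k=0,1,\dots,2N-2$. So $d\rho^N$ is a positive (the weights $1/\rho_k$ must be checked nonnegative, which is where the Autonne--Takagi positivity $d_i\geqslant 0$ and the definition \eqref{Rho} of $\rho_i$ enter) discrete measure matching the first $2N-1$ moments exactly.

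The final and genuinely hard step is the limiting argument: I would regard $\{d\rho^N\}$ as a sequence of positive Borel measures on $\mathbb{C}$ and seek a weak-* limit $d\rho$ along a subsequence, then argue that $\int_{\mathbb{C}}\lambda^k\,d\rho=s_k$ for every fixed $k$. The obstacle is twofold. First, $\mathbb{C}$ is noncompact, so one needs tightness (uniform mass and no escape to infinity) to apply a Prokhorov/Helly-type selection theorem; the total masses $s_0=\int d\rho^N$ are fixed, giving uniform boundedness, but controlling the tails requires bounding the support points $\omega_k$ or a uniform moment estimate, and since the paper stresses that $s_k$ need not be bounded this is delicate. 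Second, even granting weak-* convergence of the measures, passing to the limit inside $\int \lambda^k\,d\rho^N$ is not automatic for unbounded test functions $\lambda^k$; one typically needs uniform integrability, supplied by control of the next moment $s_{2k}$ via the fixed higher moments. I expect the cleanest route is to fix $k$, use that for all $N>k$ the $k$-th moment of $d\rho^N$ already equals $s_k$ exactly, and combine a tightness argument with a uniform-integrability bound coming from the prescribed even moments $s_{2k}$ to justify the interchange of limit and integral. This moment-convergence-plus-tightness estimate is the technical heart of the proof; once it is in place, the limit measure $d\rho$ satisfies \eqref{Moment_eq} for all $k$, completing the argument.
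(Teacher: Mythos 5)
Your proposal follows essentially the same route as the paper through the construction stage: translate non-singularity of every $S^{T}$ into non-singularity of every $C^{T}$ via the identity $C^N=\widetilde\Lambda_N S^N\left(\widetilde\Lambda_N\right)^*$ of Proposition \ref{Prop_resp_moments} (your observation that $\Lambda_N$ is unit lower triangular, hence invertible, is exactly the point the paper compresses into ``$C^{T'}$ and $S^{T'}$ are simultaneously non-singular''), invoke Theorem \ref{Th_char} and Procedure \ref{Proc} to get the discrete measures $d\rho^N$ matching $s_0,\ldots,s_{2N-2}$, and then let $N\to\infty$. One small correction along the way: positivity of the weights $1/\rho_k$ comes from (\ref{Rho}) alone, since $\rho_k=\sum_n|u^k_n|^2>0$ by orthonormality of the rows of the unitary factor; the Autonne--Takagi condition $d_i\geqslant 0$ plays no role here (and is in any case destroyed by the phase modification that makes the $d_i$ distinct).

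The divergence is in the limiting step, and here your caution points at a genuine gap --- one that the paper's own proof does not close either. The paper asserts that the stabilization $\int_{\mathbb{C}}P\,d\rho^N=\sum_k a_ks_k$ for $2N-2>M$ ``means'' that $d\rho^N$ converges $*$-weakly to a measure $d\rho$ solving (\ref{Moment_eq}). As you note, on the noncompact plane convergence of polynomial integrals yields neither tightness (the support points $\omega_k$ are not controlled uniformly in $N$, so mass can escape to infinity and be lost in any weak-$*$ limit) nor the uniform integrability needed to pass the unbounded test functions $\lambda^k$ to the limit; this is exactly where indeterminacy phenomena for moment problems live. However, your proposed remedy --- a uniform-integrability bound ``coming from the prescribed even moments $s_{2k}$'' --- does not work on $\mathbb{C}$: the data only control the holomorphic moments $\int\lambda^{2k}\,d\rho^N=s_{2k}$, whereas uniform integrability requires a bound on $\int|\lambda|^{2k}\,d\rho^N=\sum_j|\omega_j|^{2k}/\rho_j$, a mixed moment that is not determined by the $s_k$ at all (on $\mathbb{R}$ the two coincide, which is why this trick works for the classical Hamburger problem but not here). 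So your plan correctly identifies the technical heart of the theorem, but neither your sketch nor the paper supplies the estimate that would complete it; filling it would require a uniform control on the $\omega_k$ and $\rho_k$ extracted from the construction itself rather than from the moment sequence.
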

\begin{proof}
Taking an arbitrary $T\in \mathbb{N}$, and using the fact that for
$T'\leqslant T$ the corresponding $C^{T'}$ and $S^{T'}$ are
simultaneously non-singular, we use Procedure \ref{Proc} and
obtain the measure $d\rho^T$ that solves the truncated moment
problem for the set of moments $s_0,s_1,\ldots,s_{2T-2}$.

Then for arbitrary polynomial $P\in C[\lambda]$,
$P(\lambda)=\sum_{k=0}^Ma_k\lambda^k$, we have that
\begin{equation}
\label{measure_conv}
\int_{\mathbb{C}}P(\lambda)\,d\rho^N(\lambda)\longrightarrow_{N\to\infty}
\sum_{k=0}^Ma_ks_k.
\end{equation}
this follows from the fact that by the construction we have that
\begin{equation*}
\int_{\mathbb{C}}P(\lambda)\,d\rho^N(\lambda)=\sum_{k=0}^Ma_ks_k,\quad
\text{if} \quad 2N-2>M.
\end{equation*}
Convergence (\ref{measure_conv}) means that $d\rho^N$ converges
$*-$weakly as $N\to\infty$ to some measure $d\rho$, and this
$d\rho$ is a solution to the complex moment problem.
(\ref{Moment_eq}).

\end{proof}

Note that our method allows one to associate a certain measure
with the Jacobi matrix (\ref{Jac_matr}). The study of this
measure, its connection with the generalized spectral function
\cite{G} as well as related functions \cite{MMS} will be the
subject of the forthcoming publications.

\end{document}